\newcommand{\F}{\mathbb{F}}
\newcommand{\Q}{\mathbb{Q}}
\newcommand{\Z}{\mathbb{Z}}
\newcommand{\cF}{\mathcal{F}}
\newcommand{\fp}{\mathfrak{p}}
\newcommand{\OO}{\mathcal{O}}
\DeclareMathOperator{\Gal}{Gal}
\DeclareMathOperator{\Norm}{Norm}
\DeclareMathOperator{\ord}{ord}
\DeclareMathOperator{\Res}{Res}
\numberwithin{equation}{section}
\newtheorem{theorem}{Theorem}
\newtheorem{lemma}{Lemma}
\newtheorem{corollary}{Corollary}
\theoremstyle{definition}
\theoremstyle{remark}
\definecolor{darkgreen}{rgb}{0,0.5,0}
\DeclareRobustCommand{\SkipTocEntry}[5]{}
\begin{document}

\title{
On the Unit equation over cyclic number fields of prime degree
}

\begin{abstract}
Let $\ell \ne 3$ be a prime.
We show that there are only finitely many cyclic
number fields $F$ of degree $\ell$ for which the unit equation
$$\lambda + \mu = 1, \qquad \lambda, \mu \in \OO_F^\times$$
has solutions. Our result is effective. For example,
we deduce that the only cyclic quintic number field
for which the unit equation has solutions is
$\Q(\zeta_{11})^+$.
\end{abstract}

\author{Nuno Freitas}

\address{Departament de Matem\`atiques i Inform\`atica,
Universitat de Barcelona (UB),
Gran Via de les Corts Catalanes 585,
08007 Barcelona, Spain}

\email{nunobfreitas@gmail.com}

\author{Alain Kraus}
\address{Sorbonne Universit\'e,
Institut de Math\'ematiques de Jussieu - Paris Rive Gauche,
UMR 7586 CNRS - Paris Diderot,
4 Place Jussieu, 75005 Paris,
France}
\email{alain.kraus@imj-prg.fr}

\author{Samir Siksek}

\address{Mathematics Institute\\
    University of Warwick\\
    CV4 7AL \\
    United Kingdom}

\email{s.siksek@warwick.ac.uk}

\date{\today}
\thanks{Freitas is supported by a Ram\'on y Cajal fellowship with reference RYC-2017-22262.
Siksek is supported by the
EPSRC grant \emph{Moduli of Elliptic curves and Classical Diophantine Problems}
(EP/S031537/1).}
\keywords{Unit equation, cyclic fields, exceptional fields}
\subjclass[2010]{Primary 11R20}

\maketitle

\section{Introduction}
Let $F$ be a number field. Write $\OO_F$ for the integers of $F$,
and $\OO_F^\times$ for the unit group of $\OO_F$. 
A famous theorem of Siegel \cite{Siegel} asserts that
the  
\textbf{unit equation},
\begin{equation}\label{eqn:unit}
 \lambda + \mu = 1, \quad \lambda, \; \mu \in \OO_F^\times.
\end{equation}
has finitely many solutions.
Unit equations have been the subject
of research for over a century. 
Effective bounds
for the number and heights of the
solutions have
been supplied by many authors \cite[Chapter 4]{EG}.
One of the most elegant such results
is due to Evertse \cite{Evertse}, and asserts
that \eqref{eqn:unit}
has at most $3 \times 7^{3 r+4s}$ solutions, where 
$(r,s)$ is the signature of $F$. The latest effective
bounds on the heights of solutions are due to Gy\H{o}ry \cite{Gyory}.
Moreover, de Weger \cite{dW} 
has given a rather efficient algorithm for determining
the solutions to \eqref{eqn:unit} which combines
Baker's bounds for linear forms in logarithms with
the LLL algorithm.  De Weger's algorithm 
has
since been refined by a number of authors,
for example \cite{Adm}, \cite{RM}, \cite{Smart}.

It is natural to consider the existence of solutions
to \eqref{eqn:unit}. 
Nagell \cite{Nagell} 
calls a unit $\lambda \in
\OO_F^\times$ \textbf{exceptional} if $1-\lambda \in \OO_F^\times$.
The number field $F$ is called \textbf{exceptional} if it possesses
an exceptional unit. Thus $\lambda$ is exceptional
if and only if $(\lambda,1-\lambda)$ is a solution to the
unit equation \eqref{eqn:unit}, and $F$ is exceptional if and only if the unit
equation has solutions. 
In a series of papers spanning over 40 years, starting with \cite{Nagell1928}
and culminating
in \cite{Nagell2},
Nagell determined all exceptional number fields
where the unit rank is $0$ or $1$. For example, Nagell finds
\cite[Section 2]{Nagell2}
that the only exceptional quadratic fields are $\Q(\sqrt{5})$
and $\Q(\sqrt{-3})$ which contain
exceptional units $(1+\sqrt{5})/2$ and $(1-\sqrt{-3})/2$ respectively, 
and the only exceptional complex cubic
fields are the ones with discriminants $-23$ and $-31$.
He also showed \cite[Sections 3--5]{Nagell2} 
that the only exceptional real cubic fields (whence the 
unit rank is $2$)
are of the form $\Q(\lambda)$ where $\lambda$ is a root of
\[
f_k(X)=X^3+(k-1)X^2-kX-1, \qquad k \in \Z, \quad k \ge 3
\]
or of
\[
g_k(X)=X^3+kX^2-(k+3)X+1, \qquad k \in \Z, \quad k \ge -1;
\]
in both cases $\lambda$ is an exceptional unit. It turns out the 
fields $\Q(\lambda)$ defined by the $f_k(X)$ are non-Galois,
whereas the ones defined by the $g_k(X)$ are cyclic (and so Galois),
having discriminant $(k^2+3k+9)^2$.
By a \textbf{cyclic} number field we mean a finite Galois extension of $\Q$
whose Galois group is cyclic.

An interesting problem is determining whether
a family of number fields has exceptional
members.
Beyond the work of Nagell, there are relatively few
works on this problem.
A  beautiful example of such a result
is due to Triantafillou \cite{Triantafillou}:
if $3$ totally splits in a number field $F$ 
and $3 \nmid [F:\Q]$ then $F$ is non-exceptional.
Another example of such a result is found in 
\cite{FKS2}:
if $F$ is a Galois $p$-extension,
where $p \ge 5$ is a prime that totally ramifies
in $F$, then $F$ is non-exceptional.

In this note we consider the problem of determining
exceptional number fields that are cyclic of prime degree.
\begin{theorem}\label{thm:finiteness}
Let $\ell \ne 3$ be a prime. Then there are only finitely many
cyclic number fields $F$ of degree $\ell$ such that $F$
is exceptional.
\end{theorem}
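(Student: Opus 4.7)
The case $\ell = 2$ is immediate from Nagell's classification of exceptional quadratic fields, which gives only $\Q(\sqrt{5})$ and $\Q(\sqrt{-3})$. So fix an odd prime $\ell \geq 5$, an exceptional cyclic $F/\Q$ of degree $\ell$, and a solution $(\lambda, \mu)$ to \eqref{eqn:unit}. Since $\Gal(F/\Q)$ has odd order, complex conjugation acts trivially on $F$ and hence $F$ is automatically totally real; let $\sigma$ generate $\Gal(F/\Q)$, so that the $\ell$ Galois conjugate pairs $(\sigma^i\lambda, \sigma^i\mu)$ also satisfy the unit equation.

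The first step is to extract sharp local information at small rational primes. For any prime $\fp$ of $F$ with residue field $\F_2$, both units reduce to $1$ and the unit equation gives $0 \equiv 1 \pmod{\fp}$, an absurdity; hence $2$ does not split completely in $F$. On the other hand, tame ramification of $2$ with index $\ell$ would require $\ell \mid 2-1$, and wild ramification would require $2 \mid \ell$; neither is possible for odd $\ell \geq 5$, so $2$ must be \emph{inert} in $F$. An analogous argument, using Triantafillou's theorem to exclude total splitting of $3$ (valid since $\ell \neq 3$) together with the parallel ramification calculation (valid since $\ell \neq 2, 3$), shows that $3$ is likewise inert in $F$. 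Finally, by \cite{FKS2} the prime $\ell$ cannot totally ramify in $F$, hence $\ell$ is unramified in $F$ and the conductor $\ff_F$ is a squarefree product of primes congruent to $1 \pmod{\ell}$.

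To promote these local constraints to finiteness of $F$, my plan is to attach to the solution the Legendre-type Frey elliptic curve $E_\lambda\colon y^2 = x(x-1)(x-\lambda)$ over $F$. Its discriminant $16\lambda^2\mu^2$ is a unit away from the unique prime $\fp$ above $2$, so $E_\lambda$ has good reduction outside $\fp$ and, since $F$ is totally real, is modular with conductor dividing a bounded power of $\fp$. Exploiting the Galois-stable family $\{E_{\sigma^i\lambda}\}_{i=0}^{\ell-1}$ and invoking level-lowering, one hopes to descend the associated Hilbert newform (up to base change) to a classical newform over $\Q$ of level bounded in terms of $\ell$ alone, which together with the splitting data of the preceding paragraph would bound $\ff_F$ uniformly; Kronecker-Weber then confines $F$ to a finite explicit list. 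The principal obstacle is precisely this descent step: controlling the field of definition of the newform attached to $E_\lambda$ under cyclic base change of prime degree requires delicate cohomological bookkeeping and careful analysis of the mod-$\ell$ Galois representation, and it is precisely here that the hypothesis $\ell \neq 3$ must enter essentially, since Nagell's infinite family of simplest cyclic cubic fields $g_k(X)$ provides an explicit obstruction to any such descent-based bound when $\ell = 3$.
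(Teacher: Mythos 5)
Your local analysis at $2$, $3$ and $\ell$ is correct as far as it goes, but it does not establish finiteness, and the step that would have to do the real work is exactly the one you leave as a hope. The constraints you derive --- conductor a squarefree product of primes $p \equiv 1 \pmod{\ell}$, with $2$ and $3$ inert and $\ell$ unramified --- are satisfied by infinitely many cyclic fields of degree $\ell$: for infinitely many primes $p \equiv 1 \pmod{\ell}$ neither $2$ nor $3$ is an $\ell$-th power modulo $p$, and for each such $p$ the degree-$\ell$ subfield of $\Q(\zeta_p)$ meets all your conditions. So nothing in the written argument bounds the conductor. The proposed rescue via the Frey curve $y^2=x(x-1)(x-\lambda)$ is not carried out, and beyond the descent issue you yourself flag it faces further obstacles: modularity of elliptic curves over an arbitrary totally real field of degree $\ell$ is not known unconditionally, and even granting it, level-lowering would place the residual representation at a level supported only at the prime above $2$, which constrains $E_\lambda$ but gives no evident uniform bound on the field $F$ itself. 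As written, Theorem~\ref{thm:finiteness} is not proved.

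The paper's mechanism is far more elementary, and it is the idea your proposal is missing. If $p$ ramifies in $F$, then $p$ is totally ramified (as $F$ is cyclic of prime degree), so the unique prime $\fp$ above $p$ has residue field $\F_p$ and is fixed by $\Gal(F/\Q)$; hence every Galois conjugate of a unit $\lambda$ is congruent to the same integer $b$ modulo $\fp$, and taking norms gives $b^{\ell}\equiv \pm 1\pmod p$, i.e.\ $b^{2\ell}\equiv 1 \pmod p$. Applying this to both $\lambda$ and $\mu=1-\lambda$ shows that $X^{2\ell}-1$ and $(X-1)^{2\ell}-1$ have a common root in $\F_p$, so $p$ divides the resultant $R_\ell$ of \eqref{eqn:Rl}; and $R_\ell\neq 0$ precisely because $\ell\neq 3$ (modulo $\ell$ the two polynomials factor as $(X-1)^\ell(X+1)^\ell$ and $X^\ell(X-2)^\ell$, which share a root only when $\ell=3$). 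This bounds the set of ramified primes, hence the discriminant and conductor, hence the field. Note that this is also where the hypothesis $\ell\neq 3$ genuinely enters --- not in any base-change descent.
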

For $\ell=2$ the theorem is due to Nagell \cite{Nagell2}
who showed, as observed above, that the only exceptional
quadratic fields are $\Q(\sqrt{5})$ and $\Q(\sqrt{-3})$.
For $\ell=3$ the theorem is false. Indeed, as already observed
the fields defined by the polynomials $g_k$ are cyclic cubic and exceptional,
and Nagell showed \cite[Th\'eor\`eme 7]{Nagell2} that this family contains infinitely many pairwise non-isomorphic members.
For $\ell \ge 5$,
Theorem~\ref{thm:finiteness} is an immediate 
consequence of the following more 
precise theorem.
\begin{theorem}\label{thm:discs}
Let $\ell \ge 5$ be a prime,
and write
\begin{equation}\label{eqn:Rl}
R_\ell = \Res(X^{2\ell}-1,(X-1)^{2\ell}-1),
\end{equation}
where $\Res$ denotes the resultant.
Then $R_\ell \ne 0$.
Let
\begin{equation}\label{eqn:Sl}
S_\ell=\{ p \mid R_\ell \; : \; \text{$p$ is a prime $\equiv 1 \bmod{\ell}$}\}.
\end{equation}
Let $F$ be a cyclic number field of degree $\ell$, and suppose
the unit equation \eqref{eqn:unit} has solutions.
Write $\Delta_F$ for the discriminant of $\OO_F$,
and $N_F$ for the conductor of $F$.
Then there is a non-empty subset $T \subseteq S_\ell$ such that
\begin{equation}\label{eqn:disc}
\Delta_F= \prod_{p \in T} p^{\ell-1}, \qquad
N_F=\prod_{p \in T} p.
\end{equation}
\end{theorem}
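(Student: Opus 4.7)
The plan is to combine three ingredients: the result from \cite{FKS2} quoted in the introduction (to rule out ramification at $\ell$), the standard analysis of tame ramification in cyclic prime-degree extensions, and a norm computation modulo a ramified prime $\fp$ that forces the reduction of $\lambda$ to be a common root of the two polynomials whose resultant defines $R_\ell$. The last ingredient is where I would expect the main insight of the argument to lie; the remaining steps are more or less formal.

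I would first dispose of $R_\ell \ne 0$ geometrically. By definition $R_\ell = 0$ iff there exist $2\ell$-th roots of unity $\zeta, \zeta'$ with $\zeta - \zeta' = 1$, equivalently $|\zeta| = |\zeta - 1| = 1$. Intersecting the unit circle with the unit circle centred at $1$ forces $\zeta = e^{\pm i \pi/3}$, a primitive $6$th root of unity; this is a $2\ell$-th root of unity only if $3 \mid \ell$, which is impossible for a prime $\ell \ge 5$.

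Next I would analyse the ramified primes of $F$. Applying the cited result of \cite{FKS2} to the Galois $\ell$-extension $F/\Q$ with $\ell \ge 5$ shows that $\ell$ is unramified in $F$: otherwise $\ell$ would be totally ramified (as $[F:\Q]=\ell$ is prime) and $F$ would fail to be exceptional. For any other ramified prime $p$, since $[F:\Q]=\ell$ is prime, $p$ is totally ramified with a unique prime $\fp$ above it and residue field $\OO_F/\fp = \F_p$; the inertia group coincides with the full Galois group $G = \Gal(F/\Q)$ and acts trivially on the residue field, while the cyclic tame inertia of order $\ell$ embeds into $\F_p^\times$, forcing $p \equiv 1 \pmod{\ell}$.

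To show that every such $p$ divides $R_\ell$, let $\sigma$ be a generator of $G$. The norm $N_{F/\Q}(\lambda) = \prod_{i=0}^{\ell-1}\sigma^i(\lambda)$ lies in $\OO_F^\times \cap \Z = \{\pm 1\}$. Since each $\sigma^i$ fixes $\fp$ and acts trivially on $\F_p$, reduction modulo $\fp$ gives $\bar\lambda^\ell = \pm 1$ in $\F_p^\times$ and hence $\bar\lambda^{2\ell} = 1$; the same argument applied to $\mu$ yields $\bar\mu^{2\ell} = 1$. Because $\bar\mu = 1 - \bar\lambda$, the reduction $\bar\lambda \in \F_p$ is a common root of $X^{2\ell} - 1$ and $(X-1)^{2\ell} - 1$, whence $p \mid R_\ell$. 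Letting $T$ denote the set of primes ramifying in $F$, this shows $T \subseteq S_\ell$, and $T$ is non-empty because $\Q$ has no nontrivial unramified extensions. The conductor--discriminant formula for cyclic degree-$\ell$ extensions then yields $\Delta_F = N_F^{\ell-1}$, and since every ramified prime is tame, $N_F = \prod_{p \in T} p$, completing the argument.
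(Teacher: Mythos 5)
Your proposal is correct, and its central arithmetic step --- reducing the norm relation $\Norm_{F/\Q}(\lambda)=\pm 1$ modulo the unique prime $\fp$ above a ramified $p$ to get $\bar\lambda^{2\ell}=\bar\mu^{2\ell}=1$ and hence $p\mid R_\ell$ --- is exactly the paper's Lemmas on total ramification and on $R_\ell$. Where you diverge is in the surrounding bookkeeping, and the differences are worth noting. You prove $R_\ell\ne 0$ archimedeanly (intersecting the circles $|z|=1$ and $|z-1|=1$, so a common root must be a primitive sixth root of unity), whereas the paper reduces mod $\ell$ and factors $X^{2\ell}-1=(X-1)^\ell(X+1)^\ell$ and $(X-1)^{2\ell}-1=X^\ell(X-2)^\ell$; the paper's computation proves the strictly stronger statement $\ell\nmid R_\ell$, which, combined with ``every ramified prime divides $R_\ell$,'' shows $\ell$ is unramified with no outside input. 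You instead rule out ramification at $\ell$ by invoking the quoted theorem of \cite{FKS2}; this is legitimate and non-circular, but it imports a much heavier result where a two-line factorization suffices, and it obscures the pleasant fact that the single quantity $R_\ell$ controls both the wild and tame ramification. For $p\equiv 1\pmod\ell$ you use the embedding of tame inertia into $\F_p^\times$, while the paper argues through conductors of subfields of cyclotomic fields (its Lemmas on conductors); your route is more direct, and your appeal to tameness also gives the squarefreeness of $N_F$ that the paper extracts from the same cyclotomic lemma. Finally, you get $\Delta_F=N_F^{\ell-1}$ from the conductor--discriminant formula rather than from the paper's computation of the different in a tame totally ramified extension; both are standard, but note that the conductor--discriminant formula pins down only $\lvert\Delta_F\rvert$, so to get the signed equality asserted in the theorem you should add the paper's one-line observation that a Galois field of odd degree is totally real and therefore has positive discriminant. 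This is the only (very minor) omission; everything else checks out.
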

We recall that the \textbf{conductor} of a finite abelian extension $F/\Q$
is the smallest $n$ such that $F \subseteq \Q(\zeta_n)$, where $\zeta_n=\exp(2\pi i/n)$.
Theorem~\ref{thm:discs} is effective, in the sense that 
given a prime $\ell \ge 5$, it gives
an effective algorithm for determine all exceptional cyclic number fields
of degree $\ell$. Indeed, the theorem yields a finite list
of cyclic fields of degree $\ell$ that maybe exceptional,
and for each such cyclic field  
we can simply solve the unit equation 
using de Weger's aforementioned algorithm to decide if it exceptional
or not. We illustrate this by establishing
the following corollary.
\begin{corollary}\label{cor}
The only exceptional cyclic quintic field 
%$F$ for which the unit equation
%\eqref{eqn:unit} has solutions 
is $F=\Q(\zeta_{11})^+$.
\end{corollary}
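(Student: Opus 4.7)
My plan is to apply Theorem~\ref{thm:discs} with $\ell=5$ and then verify the resulting finite list of candidates by a direct unit-equation computation.

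First I would compute the integer $R_5=\Res(X^{10}-1,(X-1)^{10}-1)$ explicitly (for instance via the formula $R_5=\prod_{\alpha^{10}=1}\bigl((\alpha-1)^{10}-1\bigr)$) and factor it. Extracting the prime divisors $p\mid R_5$ with $p\equiv 1\pmod 5$ produces the finite set $S_5$. By Theorem~\ref{thm:discs}, any exceptional cyclic quintic $F$ must satisfy $N_F=\prod_{p\in T}p$ and $\Delta_F=N_F^{\,4}$ for some non-empty $T\subseteq S_5$, confining the search to finitely many conductors.

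For each non-empty $T\subseteq S_5$ I would then enumerate the cyclic quintic subfields of $\Q(\zeta_{N_T})$ of conductor exactly $N_T$. Via the isomorphism $\Gal(\Q(\zeta_{N_T})/\Q)\simeq\prod_{p\in T}(\Z/p\Z)^\times$ and the fact that each factor has a unique quotient of order $5$ (since $p\equiv 1\pmod 5$), such fields correspond to surjections $\prod_{p\in T}(\Z/p\Z)^\times\surjects\Z/5\Z$ that are non-trivial on every factor, modulo $\Aut(\Z/5\Z)\simeq(\Z/5\Z)^\times$. This yields $4^{|T|-1}$ candidates for each $T$, each of which can be constructed explicitly by class field theory or as an appropriate subfield of $\Q(\zeta_{N_T})$. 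In particular $T=\{11\}$ gives exactly the one field $\Q(\zeta_{11})^+$.

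Finally, for each candidate field $F$ I would run de Weger's algorithm \cite{dW} (or one of its refinements \cite{Smart,Adm,RM}), combining Baker-type lower bounds for linear forms in logarithms with LLL reduction, to enumerate all solutions of \eqref{eqn:unit} over $F$. Since every cyclic quintic field is totally real, the unit rank is always $4$, which is comfortably within range of standard PARI/GP or SageMath implementations. The corollary follows once we verify that $\Q(\zeta_{11})^+$ is the only candidate admitting solutions (solutions on this field are easily produced from cyclotomic units such as $(1-\zeta_{11}^k)/(1-\zeta_{11})$).

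The main obstacle is purely computational rather than structural: $R_5$ could in principle have several prime factors $\equiv 1\pmod 5$, and for subsets $T$ with $|T|\ge 2$ the candidate fields have large conductor and large regulator, which inflates the Baker bounds and makes the subsequent reduction step delicate. In practice, however, for $\ell=5$ the set $S_5$ should be small enough, and the subsets $T$ with $|T|\ge 2$ few enough, that the entire enumeration can be handled on a standard computer algebra system.
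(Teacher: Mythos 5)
Your proposal follows essentially the same route as the paper: compute and factor $R_5$ (which gives $S_5=\{11,31\}$ and hence conductors $11$, $31$, $341$), enumerate the six candidate cyclic quintic fields, and run a de Weger--type unit equation solver on each, confirming that only $\Q(\zeta_{11})^+$ is exceptional. The only differences are presentational (you count the fields via surjections onto $\Z/5\Z$ rather than index-$5$ subgroups, and suggest PARI/GP or SageMath where the paper uses \texttt{Magma}).
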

The proof of Corollary~\ref{cor} is found in Section~\ref{sec:cor}.

\bigskip

\noindent \textbf{Remark.}
Let $\cF$ be the collection of all exceptional cyclic fields
of prime degree $\ne 3$. 
It is natural in view of the above
results to wonder if $\cF$ is finite or infinite.
We believe that $\cF$ is infinite, as we now explain.
First let $p \ge 5$ be a prime, and let $F=\Q(\zeta_p)^+$.
We will show that 
$F$ is exceptional by exhibiting a solution to the unit
equation \eqref{eqn:unit}. Let $\lambda=2+\zeta_p+\zeta_p^{-1}$
and $\mu=-1-\zeta_p-\zeta_p^{-1}$. Then $\lambda$, $\mu$
belong to $\OO_F$ and satisfy $\lambda+\mu=1$.
We need to show that $\lambda$, $\mu$ units in $\OO_F$
and for this it is in fact enough to show that they
are units in $\Z[\zeta_p]$. Recall that the unique prime
ideal above $p$ in $\Z[\zeta_p]$ is generated by $1-\zeta_p^{j}$
where $j$ is any integer $\not \equiv 0 \pmod{p}$,
and thus the ratio $(1-\zeta_p^j)/(1-\zeta_p^k)$ is a unit
for any pair of integers $j$, $k \not \equiv 0 \pmod{p}$. Note that
\[
\lambda=(1+\zeta_p)(1+\zeta_p^{-1})=\frac{(1-\zeta_p^2)(1-\zeta_p^{-2})}{(1-\zeta_p)(1-\zeta_p^{-1})}, \qquad
\mu=-\zeta_p^{-1}(1+\zeta_p+\zeta_p^{2})=-\zeta_p^{-1} \frac{(1-\zeta_p^3)}{(1-\zeta_p)},
\]
showing that $\lambda$, $\mu$ are units. Hence $F=\Q(\zeta_p)^+$ is exceptional
for all $p \ge 5$. Note that $F$ is cyclic of degree $(p-1)/2$.
Recall that a \textbf{Sophie Germain prime} is a prime $\ell$ such that $p=2\ell+1$
is also prime. 
For any Sophie Germain prime $\ell \ge 5$, the number field $F=\Q(\zeta_p)^+$
with $p=2\ell+1$ is an exceptional cyclic field of degree $\ell$ and so belongs to $\cF$.
It is conjectured that there are infinitely many Sophie Germain
primes \cite[page 123]{Shoup}, and this conjecture would imply that $\cF$ 
is infinite. 

\section{Ramification in Cyclic Fields of Prime Degree}

\begin{lemma}\label{lem:totram}
Let $\ell$ be a prime. Let $F$ be a cyclic number field of degree $\ell$.
Write $\Delta_F$
for the discriminant of $\OO_F$. Let $p$ be a prime that ramifies
in $F$. Then the following hold.
\begin{enumerate}[(i)]
\item $p$ totally ramifies in $F$.
\item If $p \ne \ell$ then $\ord_p(\Delta_F)=\ell-1$.
\end{enumerate}
\end{lemma}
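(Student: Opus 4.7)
The plan is to use the standard structure theory of primes in Galois extensions together with the formula relating the different to tame ramification.

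For part (i), I would appeal to the fundamental identity in a Galois extension: if $p$ is a rational prime and $F/\Q$ is Galois of degree $n$, then $n = efg$, where $e$ is the common ramification index of the primes of $F$ above $p$, $f$ is their common residue degree, and $g$ is their number. In our setting $n = \ell$ is prime, and the hypothesis that $p$ ramifies in $F$ forces $e > 1$. Since $e \mid \ell$ and $\ell$ is prime, we conclude $e = \ell$, so $f = g = 1$ and $p$ is totally ramified.

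For part (ii), by (i) there is a unique prime $\fP$ of $\OO_F$ above $p$, with $e(\fP/p) = \ell$ and $f(\fP/p) = 1$. Since $p \neq \ell$, the ramification index $\ell$ is coprime to the residue characteristic $p$, so $\fP$ is tamely ramified. The standard formula for the different in the tame case gives $\ord_{\fP}(\mathfrak{d}_{F/\Q}) = e - 1 = \ell - 1$. Now I take norms down to $\Z$: the discriminant ideal is the norm of the different, so
\[
\ord_p(\Delta_F) \;=\; f(\fP/p) \cdot \ord_\fP(\mathfrak{d}_{F/\Q}) \;=\; 1 \cdot (\ell - 1) \;=\; \ell - 1,
\]
as required.

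There is no real obstacle here; both parts are textbook facts from algebraic number theory (see, e.g., Neukirch or Serre's \emph{Local Fields}). The only thing to be careful about is that the tame-ramification formula for the different requires $p \nmid e$, which is exactly why the hypothesis $p \neq \ell$ is needed in (ii); at $p = \ell$ the ramification is wild and the different exponent is strictly larger than $\ell-1$, so the conclusion would fail.
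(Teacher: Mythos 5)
Your proof is correct and follows essentially the same route as the paper: part (ii) is identical (tame ramification, different exponent $e-1$, then take norms), and part (i) differs only cosmetically in that you invoke the fundamental identity $efg=\ell$ where the paper uses the inertia subgroup of the prime-order Galois group — these are equivalent textbook arguments.
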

\begin{proof}
Let $I \subseteq \Gal(F/\Q)$ be an inertia subgroup for $p$.
Since $p$ ramifies, $I \ne 1$. As $\Gal(F/\Q)$ has prime order,
$I=\Gal(F/\Q)$. Hence $p$ is totally ramified in $F$,
and we can write $p\OO_F=\fp^\ell$ where $\fp$ is the unique
prime ideal above $p$.

We now prove (ii). Suppose $p \ne \ell$, therefore $p$
is tamely ramified in $F$.
Write $\mathfrak{D}_F$
for the different ideal for the extension $F/\Q$.
As the ramification degree is $\ell$,
we conclude \cite[page 199]{Neukirch} that $\ord_{\fp}(\mathfrak{D}_F)=\ell-1$.
However \cite[page 201]{Neukirch}, 
the discriminant and different are related by $\lvert \Delta_F \rvert=
\Norm_{F/\Q}(\mathfrak{D}_F)$. Hence $\ord_p(\Delta_F)=\ell-1$. This completes the
proof.
\end{proof}

\begin{lemma}\label{lem:precond}
Let $m$, $n$ be positive integers with $m \mid n$. Let $\ell$ be a prime
and let $F$ be a cyclic number field of degree $\ell$.
If $F \subseteq \Q(\zeta_n)$ and $\ell \nmid [\Q(\zeta_n) : \Q(\zeta_m)]$ then
$F \subseteq \Q(\zeta_m)$.
\end{lemma}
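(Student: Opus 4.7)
The plan is to use the standard degree formula for the compositum of two Galois extensions. Let $K = F \cap \Q(\zeta_m)$. Since $[F : \Q] = \ell$ is prime, $K$ equals either $F$ or $\Q$; in the first case $F \subseteq \Q(\zeta_m)$ and we are done. So the task reduces to ruling out the case $K = \Q$.

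Suppose for contradiction that $F \cap \Q(\zeta_m) = \Q$. Both $F/\Q$ and $\Q(\zeta_m)/\Q$ are Galois, so their compositum $L = F \cdot \Q(\zeta_m)$ is Galois over $\Q$, and the natural restriction map gives
\[
\Gal(L/\Q(\zeta_m)) \isomto \Gal(F/F\cap \Q(\zeta_m)) = \Gal(F/\Q),
\]
so $[L : \Q(\zeta_m)] = \ell$. Since $F \subseteq \Q(\zeta_n)$ and $\Q(\zeta_m) \subseteq \Q(\zeta_n)$ (because $m \mid n$), we have $L \subseteq \Q(\zeta_n)$, and therefore by multiplicativity of degrees $\ell = [L:\Q(\zeta_m)]$ divides $[\Q(\zeta_n) : \Q(\zeta_m)]$. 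This contradicts the hypothesis $\ell \nmid [\Q(\zeta_n) : \Q(\zeta_m)]$, completing the proof.

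There is essentially no obstacle here: the entire argument rests on the elementary fact about compositums of Galois extensions with trivial intersection. The only thing to double-check is that $F/\Q$ is indeed Galois, which is given (it is cyclic of prime degree $\ell$), so the restriction isomorphism above is valid.
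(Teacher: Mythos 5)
Your proof is correct and follows essentially the same route as the paper: both arguments use that $F \cap \Q(\zeta_m) = \Q$ (forced by $[F:\Q]=\ell$ prime when $F \not\subseteq \Q(\zeta_m)$), deduce $[F\cdot\Q(\zeta_m) : \Q(\zeta_m)] = \ell$ from the compositum formula for Galois extensions, and then obtain $\ell \mid [\Q(\zeta_n):\Q(\zeta_m)]$ from the tower $\Q(\zeta_m) \subseteq F\cdot\Q(\zeta_m) \subseteq \Q(\zeta_n)$, contradicting the hypothesis. The only difference is that you spell out the restriction isomorphism $\Gal(L/\Q(\zeta_m)) \isomto \Gal(F/F\cap\Q(\zeta_m))$, which the paper leaves implicit.
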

\begin{proof}
Suppse $F \subseteq \Q(\zeta_n)$ but $F \not \subseteq \Q(\zeta_m)$. As $F$ has prime degree $\ell$
we have $F \cap \Q(\zeta_m)=\Q$. Thus
$[ F \cdot \Q(\zeta_m) \, : \, \Q(\zeta_m)]=[F:\Q]=\ell$.
However, $\Q(\zeta_m) \subseteq F \cdot \Q(\zeta_m) \subseteq \Q(\zeta_n)$.
Therefore $\ell \mid [\Q(\zeta_n):\Q(\zeta_m)]$,
giving a contradiction.
\end{proof}
\begin{lemma}\label{lem:cond}
Let $\ell$ be a prime
and let $F$ be a cyclic number field of degree $\ell$. Suppose
$\ell \nmid \Delta_F$. Then the conductor of $F$
is squarefree, and divisible only by primes $p \equiv 1 \pmod{\ell}$.
\end{lemma}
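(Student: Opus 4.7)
The plan is to combine Lemma~\ref{lem:precond}, applied one prime at a time to the factorization of $N_F$, with the hypothesis $\ell \nmid \Delta_F$ (which implies $\ell$ is unramified in $F$) to exclude any contribution of $\ell$ to the conductor.

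First I would let $p$ be any prime dividing $N_F$ and write $N_F = p^a m$ with $\gcd(p,m)=1$ and $a \ge 1$. Applying Lemma~\ref{lem:precond} with $n = N_F$ and the modulus $N_F/p$ in place of $m$: by minimality of the conductor, $F \not\subseteq \Q(\zeta_{N_F/p})$, so the contrapositive of the lemma forces $\ell \mid [\Q(\zeta_{N_F}):\Q(\zeta_{N_F/p})]$. A direct computation with Euler's $\phi$-function shows this index equals $p$ when $a \ge 2$ and $p-1$ when $a = 1$. Thus one of the following must hold: either (i) $a = 1$ and $p \equiv 1 \pmod{\ell}$, or (ii) $a \ge 2$ and $p = \ell$.

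Next I would rule out case (ii). Since every ramified prime of $F$ divides $\Delta_F$ (a standard consequence of $\lvert \Delta_F \rvert = \Norm_{F/\Q}(\mathfrak D_F)$, as already invoked in Lemma~\ref{lem:totram}), the hypothesis $\ell \nmid \Delta_F$ implies that $\ell$ is unramified in $F$. Suppose for contradiction that $\ell \mid N_F$, and write $N_F = \ell^a m$ with $\gcd(\ell,m) = 1$. Under the isomorphism $\Gal(\Q(\zeta_{N_F})/\Q) \cong (\Z/\ell^a)^\times \times (\Z/m)^\times$, the inertia subgroup at $\ell$ is $(\Z/\ell^a)^\times \times \{1\}$, with fixed field $\Q(\zeta_m)$; unramifiedness of $\ell$ in $F$ forces this inertia subgroup to act trivially on $F$, i.e.\ $F \subseteq \Q(\zeta_m)$, contradicting the minimality of the conductor. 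So only case (i) occurs for every prime $p \mid N_F$, which yields exactly the conclusion: $N_F$ is squarefree and divisible only by primes $p \equiv 1 \pmod{\ell}$.

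The main subtlety is locating the right input to kill case (ii); the cleanest option is the inertia description above, but an equivalent route is the conductor-discriminant relation $\lvert \Delta_F \rvert = N_F^{\ell-1}$ (valid because the $\ell - 1$ non-trivial characters of the cyclic group $\Gal(F/\Q)$ share a common conductor $N_F$), which reduces everything to $\ell \nmid N_F$ together with Lemma~\ref{lem:totram}(ii). Either invocation is standard, and once it is in hand the rest of the argument is a short computation of cyclotomic degrees.
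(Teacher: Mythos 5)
Your proof is correct and follows essentially the same route as the paper: both apply Lemma~\ref{lem:precond} to the extension $\Q(\zeta_{N_F/p}) \subseteq \Q(\zeta_{N_F})$, compute the degree via Euler's $\varphi$, and use $\ell \nmid \Delta_F$ to exclude the prime $\ell$ from the conductor. The only cosmetic difference is that the paper obtains $\ell \nmid N_F$ up front by citing that the ramified primes are exactly those dividing the conductor (Neukirch, Cor.~VI.6.6), whereas you re-derive this via the inertia subgroup at $\ell$ (or the conductor--discriminant formula); either is fine.
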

\begin{proof}
Let $n$ be the conductor of $F$.
The primes that ramify in $F$ are precisely the primes
dividing the conductor \cite[Corollary VI.6.6]{Neukirch}.
As $\ell \nmid \Delta_F$
we see that $\ell \nmid n$. 

We would like to show that $n$ is squarefree. Suppose that $n$ is not
squarefree. Then we may write 
$n=p^r n^\prime$ where $p$ is a prime, $r \ge 2$, and $p \nmid n^\prime$.
Let $m=p n^\prime$.
We denote Euler's totient function by $\varphi$.
Then
\[
[\Q(\zeta_n) : \Q(\zeta_m)]=\frac{\varphi(n)}{\varphi(m)}=
\frac{(p-1) p^{r-1} \varphi(n^\prime)}{(p-1) \varphi(n^\prime)}=p^{r-1}.
\]
This is not divisible by $\ell$ and so by Lemma~\ref{lem:precond},
$F \subseteq \Q(\zeta_m)$. But $m<n$, contradicting the fact that $n$
is the conductor of $F$. It follows that $n$ is squarefree. 

Next let $p \mid n$ and write $n=pm$ with $p \nmid m$. Then
\[
[\Q(\zeta_n) : \Q(\zeta_m)]=p-1.
\]
By Lemma~\ref{lem:precond} and the definition of conductor we have
$\ell \mid (p-1)$.
\end{proof}

\section{The Unit Equation and Ramification}
We  now prove one of the claims in Theorem~\ref{thm:discs}.
\begin{lemma}\label{lem:Rlne0}
Let $\ell \ne 3$ be a prime.
Let $R_{\ell}$ be given by \eqref{eqn:Rl}, then $\ell \nmid R_\ell$.
In particular, $R_{\ell} \ne 0$.
\end{lemma}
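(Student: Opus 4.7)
The plan is to work modulo $\ell$, using that the resultant of two monic polynomials in $\Z[X]$ reduces well modulo any prime. Let $f(X) = X^{2\ell} - 1$ and $g(X) = (X-1)^{2\ell} - 1$; both are monic, so the reduction $R_\ell \bmod \ell$ equals the resultant in $\F_\ell[X]$ of their reductions $\bar f, \bar g$. It therefore suffices to show $\Res(\bar f, \bar g) \ne 0$ in $\F_\ell$, i.e.\ that $\bar f$ and $\bar g$ have no common root in $\overline{\F}_\ell$.

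Next I apply Frobenius in characteristic $\ell$: since the $\ell$th power map is a ring homomorphism on $\F_\ell[X]$,
\[
\bar f(X) = X^{2\ell} - 1 = (X^2-1)^\ell, \qquad \bar g(X) = (X-1)^{2\ell} - 1 = \bigl((X-1)^2 - 1\bigr)^\ell = X^\ell (X-2)^\ell.
\]
So the set of roots of $\bar f$ in $\overline{\F}_\ell$ is $\{1, -1\}$, and the set of roots of $\bar g$ is $\{0, 2\}$.

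A common root would force one of the congruences $1 \equiv 0$, $-1 \equiv 0$, $1 \equiv 2$, or $-1 \equiv 2 \pmod{\ell}$. The first three are impossible for any prime $\ell$, while the last forces $\ell = 3$, which is excluded by hypothesis. Hence $\bar f$ and $\bar g$ share no root, so $\Res(\bar f, \bar g) \ne 0$ in $\F_\ell$, i.e.\ $\ell \nmid R_\ell$; in particular $R_\ell \ne 0$.

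There is no real obstacle: the only point requiring any care is the compatibility of the resultant with reduction mod $\ell$, which holds here because both $f$ and $g$ are monic so their leading coefficients do not vanish modulo $\ell$.
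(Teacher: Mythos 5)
Your proof is correct and follows essentially the same route as the paper: reduce modulo $\ell$, use the Frobenius to factor $X^{2\ell}-1$ and $(X-1)^{2\ell}-1$ as $\ell$-th powers, and observe that the root sets $\{1,-1\}$ and $\{0,2\}$ can only intersect when $\ell=3$. The only difference is that you make explicit the (monicity-based) compatibility of the resultant with reduction mod $\ell$, which the paper leaves implicit.
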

\begin{proof}
Suppose $\ell \mid R_\ell$.
Then the polynomials $X^{2\ell}-1$
and $(X-1)^{2\ell}-1$ have a common root $\theta \in \overline{\F}_\ell$.
But in $\F_\ell[X]$ we have
\[
X^{2\ell}-1=(X^2-1)^\ell=(X-1)^\ell (X+1)^\ell,
\qquad
(X-1)^{2\ell}-1=\left((X-1)^2-1 \right)^\ell=X^\ell (X-2)^\ell.
\]
Hence $\theta \in \{1,-1\} \cap \{0,2\} \subset \F_{\ell}$.
As $\ell \ne 3$ this intersection is empty, giving a contradiction,
so $\ell \nmid R_{\ell}$.
\end{proof}
\noindent \textbf{Remark.} Lemma~\ref{lem:Rlne0} is false for $\ell=3$.
Indeed, $(1+\sqrt{-3})/2$ is a common root 
to $X^6-1$ and $(X-1)^6-1$, thus $R_3=0$.

\bigskip

For the remainder of this section $F$ will be a cyclic number field
of prime degree $\ell \ge 5$. By Lemma~\ref{lem:totram},
every rational prime $p$ which ramifies in $F$
is in fact totally ramified, and so there is 
a unique prime $\fp$ of $F$ above $p$. The prime $\fp$
must have inertial degree $1$, and so
$\OO_F/\fp \cong \F_p$.
\begin{lemma}\label{lem:totramification}
Let $\lambda \in \OO_F^\times$. Let $b \in \Z$
satisfy $\lambda \equiv b \pmod{\fp}$.
Then $b^\ell \equiv \pm 1 \pmod{p}$.
\end{lemma}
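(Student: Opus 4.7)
The plan is to exploit the norm of $\lambda$ together with the Galois-stability of $\fp$. Since $p$ totally ramifies in the Galois extension $F/\Q$ and $\fp$ is the unique prime of $F$ above $p$, every $\sigma \in \Gal(F/\Q)$ fixes $\fp$ setwise. This will let me reduce the norm of $\lambda$ to a power of $b$ modulo $\fp$.

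First I would recall that because $\lambda \in \OO_F^\times$, its absolute norm satisfies $\Norm_{F/\Q}(\lambda) = \prod_{\sigma \in \Gal(F/\Q)} \sigma(\lambda) \in \{\pm 1\}$. Next, using that $\sigma(\fp)=\fp$ for every $\sigma \in \Gal(F/\Q)$, together with $\sigma(b) = b$ for the rational integer $b$, I obtain $\sigma(\lambda) \equiv b \pmod{\fp}$ for each such $\sigma$. Taking the product over the $\ell$ elements of $\Gal(F/\Q)$ gives
\[
\pm 1 \;=\; \Norm_{F/\Q}(\lambda) \;\equiv\; b^\ell \pmod{\fp}.
\]

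Finally, since $\Norm_{F/\Q}(\lambda)$ and $b^\ell$ are both rational integers and $\fp \cap \Z = p\Z$, the congruence descends to $b^\ell \equiv \pm 1 \pmod{p}$, which is the desired conclusion. There is no serious obstacle here; the only point to be careful about is justifying the Galois-invariance of $\fp$, which follows immediately from total ramification (and is already recorded in Lemma~\ref{lem:totram}).
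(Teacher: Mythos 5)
Your argument is correct and is essentially identical to the paper's proof: both use that $\fp$ is Galois-stable because it is the unique prime above $p$, apply each $\sigma$ to the congruence $\lambda \equiv b \pmod{\fp}$, multiply to get $\pm 1 = \Norm_{F/\Q}(\lambda) \equiv b^\ell \pmod{\fp}$, and descend to a congruence modulo $p$ since both sides are rational integers. No differences worth noting.
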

\begin{proof}
As $\fp$ is the unique prime
	above $p$ we have $\fp^\sigma=\fp$ for all $\sigma \in G=\Gal(F/\Q)$.
	Applying $\sigma$ to $\lambda \equiv b \pmod{\fp}$
	gives $\lambda^\sigma \equiv b \pmod{\fp}$. 
	Hence
\[
	\pm 1=\Norm_{F/\Q}(\lambda)=\prod_{\sigma \in G} \lambda^\sigma \equiv
	b^\ell \pmod{\fp}.
\]
Since $b^\ell$ is a rational integer,
$b^\ell \equiv \pm 1 \pmod{p}$.
\end{proof}

\begin{lemma}\label{lem:Rl}
Suppose the unit equation \eqref{eqn:unit}
has a solution. Let $R_\ell$ be as in \eqref{eqn:Rl}.
Then
every prime $p$ ramifying in $F$ satisfies $p \mid R_{\ell}$.
\end{lemma}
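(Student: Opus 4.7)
The plan is to reduce a hypothetical solution of the unit equation modulo the unique prime $\fp$ of $F$ above a ramified prime $p$, and then exploit Lemma~\ref{lem:totramification} twice, once for $\lambda$ and once for $\mu$, to produce an integer that is simultaneously a root modulo $p$ of $X^{2\ell}-1$ and of $(X-1)^{2\ell}-1$.

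More precisely, suppose $\lambda+\mu=1$ with $\lambda,\mu\in\OO_F^\times$, and let $p$ be a prime ramifying in $F$. By Lemma~\ref{lem:totram}(i), $p$ is totally ramified, so there is a unique prime $\fp$ of $F$ above $p$, and $\OO_F/\fp\cong\F_p$. First I would pick $a\in\Z$ with $\lambda\equiv a\pmod{\fp}$; reducing $\lambda+\mu=1$ modulo $\fp$ gives $\mu\equiv 1-a\pmod{\fp}$. Applying Lemma~\ref{lem:totramification} to $\lambda$ yields $a^\ell\equiv\pm1\pmod{p}$, and applying it to $\mu$ yields $(1-a)^\ell\equiv\pm1\pmod{p}$.

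Squaring both relations gives $a^{2\ell}\equiv 1\pmod{p}$ and $(1-a)^{2\ell}\equiv 1\pmod{p}$. Since $(1-a)^{2\ell}=(a-1)^{2\ell}$, the integer $a$ is a common root in $\F_p$ of the monic polynomials $X^{2\ell}-1$ and $(X-1)^{2\ell}-1$. Because both polynomials are monic, their reduction mod $p$ has the same degrees, and a standard property of resultants (a common root of the reductions forces the resultant to vanish modulo $p$) gives $p\mid R_\ell$, as required.

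No step looks genuinely hard: the only thing one must be a bit careful about is that applying Lemma~\ref{lem:totramification} requires $\lambda,\mu$ to be units, which is exactly the hypothesis of the unit equation, and that we use $2\ell$ rather than $\ell$ in the resultant precisely to absorb the sign ambiguity $\pm1$ coming from the norm. Note that the argument is valid for every ramified $p$, including $p=\ell$; combined with Lemma~\ref{lem:Rlne0}, this will later force $\ell$ to be unramified in $F$, which is the input needed for Lemma~\ref{lem:cond}.
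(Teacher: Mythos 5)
Your proposal is correct and follows essentially the same argument as the paper: reduce $\lambda$ and $\mu$ modulo the unique prime $\fp$ above $p$, apply Lemma~\ref{lem:totramification} to each, square to remove the sign, and conclude that the two polynomials defining $R_\ell$ share a root in $\F_p$. The only cosmetic difference is that you write the residue of $\mu$ directly as $1-a$ rather than introducing a second integer $c$ and then observing $c\equiv 1-b\pmod{p}$.
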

\begin{proof}
Let $(\lambda,\mu)$ be a solution to the unit equation.
Let $p$ be a prime ramifying in $F$ and let $\fp$
be the prime above it. 
Write $\lambda \equiv b \pmod{\fp}$ and $\mu \equiv c \pmod{\fp}$
with $b$, $c \in \Z$. By Lemma~\ref{lem:totramification},
$b^{2\ell} \equiv  1 \pmod{p}$ and $c^{2\ell} \equiv  1 \pmod{p}$.
However,
$\lambda+\mu=1$. Hence $c \equiv 1-b \pmod{p}$. 
Therefore $(b-1)^{2\ell}=(1-b)^{2\ell} \equiv c^{2\ell} \equiv 1 \pmod{p}$. Hence, 
the polynomials $X^{2\ell}-1$ and $(X-1)^{2\ell}-1$ have a common root
in $\F_p$, showing that $p \mid R_\ell$. 
\end{proof}

\section{Proof of Theorem~\ref{thm:discs}}
We now prove Theorem~\ref{thm:discs}. Thus let $F$
be a cyclic number field of degree $\ell \ge 5$ such that the unit
equation \eqref{eqn:unit} has solutions. 
Let $R_\ell$ be given by \eqref{eqn:Rl}. From Lemma~\ref{lem:Rlne0}
we know that $R_\ell \ne 0$.
Let $S_\ell$ be given by \eqref{eqn:Sl}.
We \textbf{claim} that every prime $p$ ramified in $F$ belong to $S_\ell$.
First note that every ramified $p$ divides $R_\ell$ by Lemma~\ref{lem:Rl}.
Next note that $\ell \nmid R_\ell$ by Lemma~\ref{lem:Rlne0}.
Thus $\ell$ is unramified in $F$, and so $\ell \nmid \Delta_F$.
Now Lemma~\ref{lem:cond} tells us that every ramified $p \equiv 1 \pmod{\ell}$.
This completes the proof of the claim. 

Let $T$ be the
set of primes dividing the discriminant $\Delta_F$.
This is also the set of primes dividing the conductor $N_F$
(see for example \cite[Corollary VI.6.6]{Neukirch}).
We know from the claim that $T$ is a subset of $S_\ell$.
Moreover, by a famous theorem of Minkowski \cite[Theorem III.2.17]{Neukirch} 
there are no number fields of discriminant $\pm 1$,
and thus $T \ne \emptyset$. 

Next, by part (ii) of Lemma~\ref{lem:totram},
and Lemma~\ref{lem:cond}
\begin{equation}\label{eqn:disc2}
\Delta_F= g \cdot \prod_{p \in T} p^{\ell-1}, \qquad
N_F=\prod_{p \in T} p,
\end{equation}
where $g=\pm 1$. 
However, as $F$ is Galois
of odd degree, it is totally real, and therefore the discriminant
is positive, so $g=1$. This completes the proof.
%Alternatively, as all $p \in T$ are odd, and $2 \mid (\ell -1)$,
%we note that  $\prod p^{\ell-1}\equiv 1 \pmod{4}$.
%By Stickelberger's Theorem \cite[page 15]{Neukirch}, 
%$\Delta_F \equiv 0$ or $1 \pmod{4}$. Hence $g=1$.

\section{Proof of Corollary~\ref{cor}}\label{sec:cor}
Let $F$ be an exceptional cyclic quintic field.
We apply Theorem~\ref{thm:discs} with $\ell=5$. Then
\[
R_5=\Res(X^{10}-1,(X-1)^{10}-1)=-210736858987743=-3 \times 11^9 \times 31^3.
\]
Thus $S_5=\{11,31\}$. We obtain
three possibilities for the conductor $N_F$: $11$, $31$, $341=11 \times 31$.
Thus $F$ is a degree $5$ subfield of $\Q(\zeta_{11})$, $\Q(\zeta_{31})$
or $\Q(\zeta_{341})$.
These respectively have Galois groups isomorphic to
$\Z/10\Z$, $\Z/30\Z$ and $\Z/10\Z \times \Z/30\Z$. By the Galois
correspondence,
$\Q(\zeta_{11})$ and $\Q(\zeta_{31})$ both have a unique subfield
of degree $5$, which we denote by $F_{11}=\Q(\zeta_{11})^+$ and $F_{31}$.
The group $\Z/10\Z \times \Z/30\Z$ has six subgroups of index $5$,
and so we obtain six subfields of $\Q(\zeta_{341})$ of degree $5$.
However, two of these are $F_{11}$ and $F_{31}$, so we only
obtain four new fields which we denote by
$F_{341,1}$, $F_{341,2}$, $F_{341,3}$, $F_{341,4}$.
We found defining polynomials for all these number fields
in the John Jones Number Field Database \cite{JR}, which we reproduce
in Table~\ref{table}.
\begin{table}
\begin{tabular}{||c|c||}
\hline
field & defining polynomial\\
\hline\hline
$F_{11}=\Q(\zeta_{11})^+$ & $x^5 - x^4 - 4x^3 + 3x^2 + 3x - 1$\\
\hline
$F_{31}$ & $x^5 - x^4 - 12x^3 + 21x^2 + x - 5$\\
\hline
$F_{341,1}$ & $x^5 + x^4 - 136x^3 - 300x^2 + 2016x + 3136$\\
\hline
$F_{341,2}$ & $x^5 + x^4 - 136x^3 + 41x^2 + 3039x + 1431$\\
\hline
$F_{341,3}$ & $x^5 + x^4 - 136x^3 + 723x^2 - 1053x + 67$\\
\hline
$F_{341,4}$ & $x^5 + x^4 - 136x^3 - 641x^2 - 371x + 67$\\
\hline
\end{tabular}
\caption{Cyclic number fields $F$ with conductor dividing $341=11 \times 31$.}
\label{table}
\end{table}

We used the unit equation solver in the computer algebra package
	\texttt{Magma} \cite{magma}. This is an implementation
	of the de Weger algorithm for solving unit equation
	with improvements due to Smart \cite{Smart}.
Applying the solver to our six number fields
we find that the unit equation \eqref{eqn:unit} does not
have solutions for $F=F_{31}$ and $F=F_{341,i}$ with $i=1,\dots,4$.
It does however have $570$ solutions for $F=F_{11}=\Q(\zeta_{11})^+$.

\end{document}